\documentclass[12pt,english,leqno]{amsart}
\usepackage[T1]{fontenc}
\usepackage[latin9]{inputenc}
\usepackage{amsthm}
\usepackage{amssymb}

\makeatletter
\numberwithin{equation}{section}
\numberwithin{figure}{section}
\theoremstyle{plain}
\newtheorem{thm}{\protect\theoremname}
  \theoremstyle{plain}
  \newtheorem{prop}[thm]{\protect\propositionname}
  \theoremstyle{plain}
  \newtheorem{cor}[thm]{\protect\corollaryname}

\makeatother

\usepackage{babel}
  \providecommand{\corollaryname}{Corollary}
  \providecommand{\propositionname}{Proposition}
\providecommand{\theoremname}{Theorem}

\begin{document}

\title{remarks on metrizability of dual groups}

\author{Fr\'ed\'eric Mynard and Mikhail Tkachenko}

\maketitle
\global\long\def\F{\mathcal{F}}
\global\long\def\G{\mathcal{G}}
\global\long\def\H{\mathcal{H}}
\global\long\def\U{\mathcal{U}}
\global\long\def\S{\mathcal{S}}
\global\long\def\filter{\mathbb{F}}
\global\long\def\lm{\mathrm{lim}}
\global\long\def\adh{\mathrm{adh}}
\global\long\def\N{\mathcal{N}}
\global\long\def\A{\mathcal{A}}
\global\long\def\B{\mathcal{B}}
\global\long\def\then{\Longrightarrow}
\global\long\def\cl{\mathrm{cl}}
\global\long\def\Seq{\mathrm{Seq}}

\date{$\today$}
\begin{abstract}
We examine sufficient conditions for the dual of a topological group
to be metrizable and locally compact, improving on results of \cite{nonmetrdualangelic}.
\end{abstract}

\section{Introduction and preliminaries}

If $G$ is an abelian topological group, its dual $\hat{G}$ is the
set of continuous group homomorphisms into the one-dimensional torus
$\mathbb{T}$, endowed with the compact-open topology. 

Two families $\A$ and $\B$ of subsets of a set $X$ are said to
\emph{mesh, }in symbols $\A\#\B$, if $A\cap B\neq\varnothing$ whenever
$A\in\A$ and $B\in\B$. Thus, if the set $\mathbb{F}X$ of filters
on a set $X$ is ordered by inclusion, two filters $\F$ and $\G$
admit a supremum $\F\vee\G$ if and only if $\F\#\G$. We do not distinguish
between a sequence $(x_{n})_{n\in\omega}$ and the filter generated
by its tails. In particular, if $\H$ is a family of subsets of $X$
and $(x_{n})_{n\in\omega}$ is a sequence on $X$, the notation $\H\#(x_{n})_{n\in\omega}$
means that $H\cap\{x_{n}:n\geq k\}\neq\varnothing$ for every $H\in\H$
and $k\in\omega$. 

Recall that a topological space $X$ is\emph{ sequential }if every
sequentially closed subset is closed. The space is \emph{Fr\'echet-Urysohn,
}or simply\emph{ Fr\'echet,} if whenever $x\in X$ and $A\subset X$
with $x\in\cl A$, there is a sequence on $A$ that converges to $x$.
It is \emph{strongly Fr\'echet} if whenever $(A_{n})_{n\in\omega}$
is a decreasing sequence of subsets of $X$ with $x\in\bigcap_{n\in\omega}\cl A_{n}$
there is $x_{n}\in A_{n}$ for each $n$ with $x_{n}\to_{n}x$, equivalently
if
\[
\forall\H\in\mathbb{F}_{1},\,\adh\H\subseteq\adh_{\Seq}\H,
\]
where $\mathbb{F}_{1}$ denotes the class of countably based filters,
$\adh\H:=\bigcap_{H\in\H}\cl H$ is the adherence of the filter $\H$,
and $\adh_{\Seq}\H:=\bigcup_{(x_{n})_{n}\#\H}\lim(x_{n})_{n}$ is
its sequential adherence. We will also denote by $\cl_{\Seq}$ the
sequential closure, that is, the closure in the coarsest sequential
topology that is finer than the original. 

We are interested in metrization results for dual groups akin to:
\begin{thm}
\label{th:angelic} \cite{nonmetrdualangelic}Let $G$ be a metrizable
topological group. If $\hat{G}$ is Fr\'echet, then it is metrizable
and locally compact. 
\end{thm}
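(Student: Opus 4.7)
The plan is to show that $\hat G$ is first-countable at the identity, apply Birkhoff--Kakutani to deduce metrizability, and then extract local compactness from the resulting countable base via an Ascoli-type argument. Since $G$ is metrizable, fix a decreasing countable base $(V_n)_{n\in\omega}$ of neighborhoods of $0$ in $G$ and a countable base $(U_m)_{m\in\omega}$ of neighborhoods of $0$ in $\mathbb T$. The compact-open topology on $\hat G$ has a base at $0$ formed by the sets $[K,U_m]:=\{\chi\in\hat G:\chi(K)\subseteq U_m\}$ with $K\subseteq G$ compact; hence $\hat G$ is first-countable at $0$ precisely when $G$ is hemicompact, i.e.\ admits a countable family of compact subsets cofinal for inclusion.

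The decisive step is to deduce hemicompactness of $G$ from the Fr\'echet property of $\hat G$. Arguing contrapositively, assume that for every countable family $(K_n)_{n\in\omega}$ of compact subsets of $G$ there is a compact $L\subseteq G$ with $L\nsubseteq K_n$ for any $n$. Choosing for each $n$ a character $\chi_n$ with $\chi_n(K_n)\subseteq U_n$ but $\chi_n(L)\nsubseteq U_0$, possible because metrizable abelian groups carry enough continuous characters to separate compact sets from distinguished points (via the Bohr compactification), one assembles a countably based filter $\H$ on $\hat G$ that accumulates at $0$ in the compact-open topology. Using that Fr\'echet-Urysohn topological groups are strongly Fr\'echet, the inclusion $\adh\H\subseteq\adh_{\Seq}\H$ supplies a sequence extracted from $\H$ that converges to $0$; but any such sequence must be eventually uniformly small on $L$, contradicting $\chi_n(L)\nsubseteq U_0$ for cofinally many $n$.

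Once $G$ is hemicompact, a cofinal sequence $(K_n)$ of compact sets gives the countable base $\{[K_n,U_m]:n,m\in\omega\}$ at $0\in\hat G$, so $\hat G$ is metrizable by Birkhoff--Kakutani. Local compactness is then obtained by showing that one base neighborhood, say $[K_1,U_1]$, has compact closure in $\hat G$: the closure is pointwise bounded (values in $\mathbb T$) and equicontinuous on $K_1$, hence relatively compact in the compact-open topology by the classical Ascoli--Arzel\`a theorem.

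The principal obstacle is the construction of the witness filter $\H$ in the second paragraph: converting the combinatorial failure of hemicompactness of $G$ into a genuine Fr\'echet failure of $\hat G$ requires producing honest continuous characters satisfying simultaneous smallness on $K_n$ and non-smallness on $L$, which is where the metric structure of $G$ (rather than a weaker first-countability or completeness hypothesis) enters in an essential way.
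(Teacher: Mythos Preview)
Your approach diverges substantially from the paper's and contains genuine gaps. The paper never attempts to prove that $G$ is hemicompact; instead it uses the known fact (Chasco, Aussenhofer) that metrizability of $G$ already forces $\hat G$ to be a hemicompact $k$-space, and then argues on the product: since a Fr\'echet group is strongly Fr\'echet (Nyikos) and strongly Fr\'echet $\times$ metrizable is Fr\'echet (Michael), the product $G\times\hat G$ is a $k$-space, whence $\hat G$ is locally compact by \cite[Proposition~1.2]{dunford}; metrizability then follows because a locally compact group of countable tightness is metrizable \cite{AVAM}. Local compactness comes \emph{first}, metrizability second---the reverse of your plan.

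Your second paragraph is where the proposal actually breaks. The claim that one can choose $\chi_n\in\hat G$ with $\chi_n(K_n)\subseteq U_n$ yet $\chi_n(L)\nsubseteq U_0$ is unsupported and fails already in the simplest case: in $G=\mathbb R$ with $K_n=[-n,n]$ and any point $x\in L\setminus K_n$, every character has the form $t\mapsto e^{2\pi i\alpha t}$, and forcing $\chi(K_n)$ into a small arc makes $|\alpha|$ so small that $\chi(x)$ lands in the same arc. The Bohr compactification does not rescue this; it supplies separating characters, not characters that are simultaneously tiny on a large compact set and large at a nearby point. Without these $\chi_n$ you have no filter $\H$ and the strongly Fr\'echet machinery never engages.

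The final Ascoli step is also incorrect as written. Membership in $[K_1,U_1]$ says only that each character maps $K_1$ into $U_1$; this is neither equicontinuity on $K_1$ nor any control whatsoever on the remaining $K_n$. For group homomorphisms, equicontinuity anywhere is equicontinuity at $0$, i.e.\ containment in some $[V,U]$ with $V$ a \emph{neighborhood} of $0$ in $G$; a compact $K_1$ need not be such a neighborhood, so Ascoli--Arzel\`a does not apply. In short, neither the hemicompactness-of-$G$ step nor the local-compactness-of-$\hat G$ step goes through, and the paper's route via the product $G\times\hat G$ sidesteps both difficulties entirely.
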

To this end, let us review its simple proof: A product of a strongly
Fr\'echet space and a metrizable space is (strongly) Fr\'echet (e.g.,
\cite{michael}), and $\hat{G}$ is strongly Fr\'echet as a Fr\'echet
topological group \cite{nyikos}, so that $G\times\hat{G}$ is Fr\'echet,
hence sequential. It is in particular a $k$-space (%
\footnote{A topological space $X$ is a $k$-space if a subset $C$ is closed
whenever $C\cap K$ is closed for every compact subset $K$ of $X$.%
}), so that, in view of \cite[Proposition 1.2]{dunford}, $\hat{G}$
is locally compact. But a locally compact topological group with countable
tightness (in particular a sequential locally compact one) is metrizable
\cite{AVAM}. Hence $\hat{G}$ is locally compact and metrizable. 

Thus, the proof hinges on $G\times\hat{G}$ being a $k$-space, and
$\hat{G}$ being of countable tightness. In this note, which can be
considered as an appendix to \cite{nonmetrdualangelic}, we will consider
sufficient conditions on $G$ and $\hat{G}$ to achieve this result
that are different and often weaker than those in Theorem \ref{th:angelic}.

\section{Strongly sequential groups }

The proof outlined above relies on the following:
\begin{thm}
\cite{michael}\label{th:sFrechet} Let $X$ be a topological space.
The following are equivalent: 
\begin{enumerate}
\item $X$ is strongly Fr\'echet; 
\item $\lm\F=\bigcap_{\H\in\mathbb{F}_{1}}\adh_{\mathrm{Seq}}\H$ for every
filter $\F$ on $X$; 
\item $X\times Y$ is strongly Fr\'echet for every bisequential space $Y$; 
\item $X\times Y$ is Fr\'echet for every metrizable atomic topology. 
\end{enumerate}
\end{thm}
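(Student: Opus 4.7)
The plan is to establish the cycle $(1)\Leftrightarrow(2)$, $(1)\Rightarrow(3)\Rightarrow(4)\Rightarrow(1)$.

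First, $(1)\Leftrightarrow(2)$ follows by unwinding definitions. Since $\adh_{\Seq}\H\subseteq\adh\H$ always and $\lim\F=\bigcap_{\H\geq\F}\adh\H$ for any filter $\F$, the strongly Fr\'echet inclusion $\adh\H\subseteq\adh_{\Seq}\H$ (for $\H$ countably based) translates at once into the displayed equality, the intersection being understood over countably based refinements of $\F$. The implication $(3)\Rightarrow(4)$ is also immediate, since every metrizable space is first-countable and hence bisequential.

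For $(1)\Rightarrow(3)$, the content is the classical fact that the product of a strongly Fr\'echet space with a bisequential space is strongly Fr\'echet. I would follow Michael's argument from \cite{michael}: given a decreasing sequence of subsets of $X\times Y$ accumulating at $(x,y)$, extract countably based filters on $Y$ using bisequentiality at $y$, and match them by a diagonal construction with sequences supplied by strong Fr\'echetness on the $X$-fiber. This is the delicate step of the theorem.

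For $(4)\Rightarrow(1)$ I would argue contrapositively. Suppose $X$ is not strongly Fr\'echet, witnessed by a decreasing family $(A_n)_{n\in\omega}$ and a point $x\in\bigcap_n\cl A_n$ admitting no diagonal sequence $x_n\in A_n$ with $x_n\to x$. Take $Y=\omega\cup\{\infty\}$, the one-point compactification of the discrete integers, which is metrizable and atomic. Setting $B:=\bigcup_{n\in\omega}(A_n\times\{n\})\subseteq X\times Y$, we have $(x,\infty)\in\cl B$, whereas any sequence in $B$ converging to $(x,\infty)$ would, after passing to a subsequence with strictly increasing second coordinates and using that $(A_n)$ is decreasing, yield a diagonal sequence $x_n\in A_n$ converging to $x$, a contradiction. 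Hence $X\times Y$ is not Fr\'echet, violating (4).

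The main obstacle is $(1)\Rightarrow(3)$: Michael's product theorem requires a careful interplay between the bisequential filter structure at $y\in Y$ and countably based filters on $X$. The remaining implications are essentially formal once the atomic testing space $\omega\cup\{\infty\}$ has been singled out.
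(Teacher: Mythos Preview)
The paper does not give its own proof of this theorem: it is quoted verbatim from Michael \cite{michael} as background for the analogy with Theorem~\ref{th:sseq}, and no argument is supplied. So there is nothing in the paper to compare your attempt against.

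That said, your outline is the standard one and is essentially correct. A couple of minor remarks. In $(2)$ the intersection must of course be taken over countably based $\H$ that refine (or at least mesh with) $\F$; you note this, and it is indeed the only sensible reading of the displayed formula. For $(4)\Rightarrow(1)$, your test space $Y=\omega\cup\{\infty\}$ is the convergent sequence, which is certainly a metrizable atomic space; the passage from a convergent sequence $(b_k,n_k)\to(x,\infty)$ in $B$ to a diagonal sequence $x_n\in A_n$ with $x_n\to x$ requires one line of bookkeeping (fill in the gaps between the strictly increasing $n_{k_j}$ using that the $A_n$ are decreasing), but this is routine. The substantive implication $(1)\Rightarrow(3)$ you correctly attribute to Michael's product theorem; since the whole result is cited from \cite{michael}, deferring that step to the source is entirely appropriate here.
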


\emph{Strongly sequential} spaces are in some sense to sequential
spaces like strongly Fr\'echet spaces are to Fr\'echet spaces: 
\begin{thm}
\cite{sseq}\label{th:sseq} Let $X$ be a $T_{1}$ topological space.
The following are equivalent: 
\begin{enumerate}
\item $X$ is strongly sequential; 
\item $\lm\F=\bigcap_{\H\in\mathbb{F}_{1}}\cl_{\mathrm{Seq}}(\adh_{\mathrm{Seq}}\H)$
for every filter $\F$ on $X$; 
\item $X\times Y$ is strongly sequential for every bisequential space $Y$; 
\item $X\times Y$ is sequential for every metrizable atomic topology. 
\end{enumerate}
\end{thm}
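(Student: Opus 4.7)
The plan is to mirror E.~Michael's proof of Theorem \ref{th:sFrechet} for strongly Fr\'echet spaces, inserting the sequential closure $\cl_{\Seq}$ in appropriate places to capture ``sequential'' in place of ``Fr\'echet-Urysohn''. I would run through the cycle of implications $(1)\then(2)\then(3)\then(4)\then(1)$.

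For the equivalence $(1)\Leftrightarrow(2)$: unpack the definition of strongly sequential as the filter condition $\adh\H\subseteq\cl_{\Seq}(\adh_{\Seq}\H)$ for every $\H\in\mathbb{F}_{1}$. Condition (2) is then an arbitrary-filter reformulation: the inclusion $\lm\F\subseteq\bigcap_{\H\in\mathbb{F}_{1},\,\H\#\F}\cl_{\Seq}(\adh_{\Seq}\H)$ is immediate since a sequential limit is in particular a limit; the reverse inclusion is obtained by contraposition, extracting a countably based $\H\#\F$ that witnesses $x\notin\cl_{\Seq}(\adh_{\Seq}\H)$ whenever $x\notin\lm\F$, and applying the filter form of strongly sequential.

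For $(2)\then(3)$: given a filter $\F$ on $X\times Y$ with $Y$ bisequential, apply (2) on the factor $X$. For a countably based $\H$ meshing with the $X$-projection of $\F$, use bisequentiality of $Y$ to extract a countably based filter on $Y$ converging to the $Y$-component of any candidate limit, and diagonalize to recover a countably based filter on $X\times Y$ whose iterated sequential adherence controls the limit. The implication $(3)\then(4)$ is immediate, since every metrizable atomic topology is bisequential and strongly sequential implies sequential.

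The core of the proof is $(4)\then(1)$, which I approach by contraposition. If $X$ fails to be strongly sequential, there is a decreasing countably based filter $\H$ generated by $\{H_{n}:n\in\omega\}$ and a point $x\in\adh\H$ with $x\notin\cl_{\Seq}(\adh_{\Seq}\H)$. Take $Y=\{0\}\cup\{1/n:n\geq 1\}$, the canonical metrizable atomic space with unique non-isolated point $0$, and set
\[
A:=\bigcup_{n\geq 1}\bigl(H_{n}\times\{1/n\}\bigr)\subseteq X\times Y.
\]
Then $(x,0)\in\cl A$, while no sequence in $A$ converges to $(x,0)$, for its $X$-projection would mesh with $\H$ and converge to $x$, placing $x$ in $\adh_{\Seq}\H$. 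To promote this to failure of sequentiality (not merely of Fr\'echet-ness) one iterates the construction by taking successive sequential closures, ultimately contradicting $x\notin\cl_{\Seq}(\adh_{\Seq}\H)$. Controlling this iteration — so that sequential closures in the product $X\times Y$ are faithfully reflected by the composed operator $\cl_{\Seq}\circ\adh_{\Seq}$ on $X$ — is the main technical obstacle, and is precisely where the outer $\cl_{\Seq}$ in (2) and the weakening from Fr\'echet to sequential enter the argument.
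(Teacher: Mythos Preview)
The paper does not prove Theorem~\ref{th:sseq}: it is quoted verbatim from \cite{sseq} and used as a black box, exactly as Theorem~\ref{th:sFrechet} is quoted from \cite{michael}. There is therefore no proof in this paper against which your proposal can be compared.

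As to the proposal itself, the cycle $(1)\Rightarrow(2)\Rightarrow(3)\Rightarrow(4)\Rightarrow(1)$ is the natural route and is indeed how the source \cite{sseq} proceeds. Your handling of $(1)\Leftrightarrow(2)$ is essentially right, though the phrase ``a sequential limit is in particular a limit'' does not justify the inclusion you attach it to; what you need there is that for $x\in\lm\F$ and countably based $\H$ meshing with $\F$ one has $x\in\adh\H$, whence $x\in\cl_{\Seq}(\adh_{\Seq}\H)$ by (1). The real content is $(4)\Rightarrow(1)$, and here your proposal stops at the point where the work begins. You correctly build the test set $A\subset X\times Y$ and observe that no sequence in $A$ reaches $(x,0)$, but that only shows $X\times Y$ is not Fr\'echet. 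To defeat sequentiality you must show $(x,0)\notin\cl_{\Seq}A$, which requires a transfinite induction on the iterated one-step sequential closure of $A$: at each stage one checks that the fibre over $0$ stays inside $\adh_{\Seq}\H\times\{0\}$ (or, after further iteration, inside $\cl_{\Seq}(\adh_{\Seq}\H)\times\{0\}$), so that the hypothesis $x\notin\cl_{\Seq}(\adh_{\Seq}\H)$ blocks $(x,0)$ permanently. You name this obstacle but do not discharge it; as written, the proposal is an outline rather than a proof.
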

The proof outlined in the Introduction thus applies virtually unchanged
to the effect that:
\begin{prop}
\label{prop:variantsseq} Let $G$ be a metrizable topological group.
If $\hat{G}$ is strongly sequential, then it is metrizable and locally
compact. 
\end{prop}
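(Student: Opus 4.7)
The plan is to imitate, essentially verbatim, the proof of Theorem \ref{th:angelic} sketched in the Introduction, with Theorem \ref{th:sseq} replacing Theorem \ref{th:sFrechet} at the one step where strong Fr\'echetness was used. So the skeleton is: product with a bisequential space preserves strong sequentiality $\leadsto$ $k$-space $\leadsto$ local compactness of $\hat G$ via \cite[Proposition 1.2]{dunford} $\leadsto$ metrizability via \cite{AVAM}.

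First, I would note that $G$, being metrizable, is bisequential. Since $\hat G$ is assumed strongly sequential, item (3) of Theorem \ref{th:sseq} (applied with $Y=G$) yields that $\hat G\times G$ is strongly sequential. In particular, $\hat G\times G$ is sequential, and therefore a $k$-space.

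Next, I invoke \cite[Proposition 1.2]{dunford} to conclude from the fact that $\hat G\times G$ is a $k$-space (with $G$ metrizable) that $\hat G$ is locally compact. Finally, a strongly sequential space is sequential, hence of countable tightness; so $\hat G$ is a locally compact topological group of countable tightness, and \cite{AVAM} gives that $\hat G$ is metrizable.

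Because every step is an application of a result already in the paper or cited, there is no real obstacle; the only thing worth double-checking is that item (3) of Theorem \ref{th:sseq} is indeed the right hypothesis to feed into the argument (as opposed to item (4), which only guarantees sequentiality of the product against atomic metrizable topologies), since we genuinely need $\hat G\times G$ itself to be sequential, not just its product with atomic metric spaces. Bisequentiality of metrizable spaces makes (3) directly applicable, so this is routine.
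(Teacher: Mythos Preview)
Your proposal is correct and follows exactly the approach the paper indicates: the paper simply asserts that ``the proof outlined in the Introduction thus applies virtually unchanged,'' and what you have written is precisely that virtually-unchanged argument, with Theorem~\ref{th:sseq}(3) (bisequential $\times$ strongly sequential is strongly sequential) standing in for Theorem~\ref{th:sFrechet}, and the Nyikos step replaced by the hypothesis on $\hat G$. Your observation that item~(3) rather than item~(4) of Theorem~\ref{th:sseq} is the relevant one is also on point.
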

Since Fr\'echet topological groups are strongly Fr\'echet and strongly
Fr\'echet spaces are strongly sequential, Proposition \ref{prop:variantsseq}
formally generalizes Theorem \ref{th:angelic}. 

Note that while Theorems \ref{th:sFrechet} and \ref{th:sseq} highlight
similarities between strongly Fr\'echet and strongly sequential spaces,
Proposition \ref{prop:variantsseq} points to an important difference:
While Fr\'echet-Urysohn topological groups are automatically strongly
Fr\'echet, a sequential topological group does not need to be strongly
sequential. Indeed, there are metrizable groups $G$ such that $\hat{G}$
is sequential but not metrizable \cite{nonmetrdualangelic}. In view
of Proposition \ref{prop:variantsseq}, such dual groups are sequential
but not strongly sequential.

We will now see that Proposition \ref{prop:variantsseq} generalizes
Theorem \ref{th:angelic} only formally, as both are in fact instances
of a general metrization theorem for topological groups that is not
specific to dual groups. Indeed, if $G$ is metrizable, then $\hat{G}$
is a hemicompact $k$-space \cite{chascometr}, \cite{aussmetr}.
Thus, Theorem \ref{th:angelic} follows from:
\begin{thm}
\cite{hemicompactgroups}\label{th:hemicompactgroups} A hemicompact
Fr\'echet-Urysohn topological group is locally compact and Polish. 
\end{thm}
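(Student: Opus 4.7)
The plan is to establish the conclusion in three stages: local compactness, metrizability, and then Polishness. For local compactness I would combine hemicompactness with the fact that a Fr\'echet-Urysohn topological group is strongly Fr\'echet \cite{nyikos}. Let $(K_n)_{n\in\omega}$ be an increasing sequence of compact sets witnessing hemicompactness. Suppose for contradiction that $G$ is not locally compact; by homogeneity, $G$ fails to be locally compact at the identity $e$. Then no open neighborhood of $e$ is contained in any $K_n$, for otherwise its closure would lie in the closed compact set $K_n$ and provide a compact neighborhood of $e$. It follows that $e\in\cl(G\setminus K_n)$ for every $n$, and since $(G\setminus K_n)_{n\in\omega}$ is a decreasing family of subsets, strong Fr\'echetness yields $x_n\in G\setminus K_n$ with $x_n\to e$. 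But $\{x_n:n\in\omega\}\cup\{e\}$ is then compact, hence contained in some $K_m$, which contradicts $x_n\notin K_n\supseteq K_m$ for $n\geq m$.

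Once local compactness is in hand, countable tightness (a consequence of the Fr\'echet-Urysohn property) together with the metrization theorem cited as \cite{AVAM} forces $G$ to be metrizable. For the final step, hemicompactness implies $\sigma$-compactness, so $G$ is Lindel\"of. A Lindel\"of metrizable space is separable and hence second countable, and a locally compact, separable, metrizable topological group is Polish, being completely metrizable via a two-sided invariant metric.

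The main obstacle I anticipate lies in the first step, since the other two are fairly direct chains of classical theorems. The heart of the argument is the diagonal extraction that converts the hemicompact exhaustion into a decreasing family whose strong Fr\'echet selection necessarily escapes the compact $K_m$ in which, by hemicompactness, its convergent tail ought eventually to lie.
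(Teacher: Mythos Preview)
Your proof is correct and mirrors the paper's argument almost exactly: the decreasing family $(G\setminus K_n)$, the diagonal selection $x_n\in G\setminus K_n$ converging to a point, and the resulting compact set escaping every $K_n$ are precisely the content of Theorem~\ref{thm:hemiTanaka}, and the metrization step via \cite{AVAM} is identical. The paper's only refinement is to isolate the weaker \emph{Tanaka condition} in place of strong Fr\'echetness, which buys the more general Corollary~\ref{cor:hemicompactgroups} (and hence covers strongly sequential groups as well); conversely, you spell out the Polish conclusion, which the paper leaves implicit.
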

The local compactness (hence also metrization) part of this result
follows in turn from a more general fact. 

Recall from \cite{sseq2} that a topological space is called a \emph{Tanaka
space }if 
\[
\tag{Tanaka condition}\H\in\mathbb{F}_{1},\,\adh\H\neq\varnothing\then\adh_{\mathrm{Seq}}\H\neq\varnothing.
\]
Of course, every strongly sequential space, in particular every strongly
Fr\'echet space, is Tanaka. Thus every Fr\'echet topological group is
Tanaka. Therefore Theorem \ref{th:hemicompactgroups} is an instance
of Corollary \ref{cor:hemicompactgroups} below, and Proposition \ref{prop:variantsseq}
follows from Corollary \ref{cor:hemicompactgroups} as well, showing
that Theorem \ref{th:angelic} and Proposition \ref{prop:variantsseq}
are essentially the same.
\begin{thm}
\label{thm:hemiTanaka} A Hausdorff hemicompact Tanaka space has a
point with a compact neighborhood.\end{thm}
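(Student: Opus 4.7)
The plan is to argue by contradiction: assume that no point of $X$ has a compact neighborhood, and use the hemicompact structure to build a countably based filter whose sequential adherence must be empty, contradicting the Tanaka condition.

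First I would unpack the assumption. A compact set $K$ is a neighborhood of every point in its interior, so ``no point has a compact neighborhood'' is equivalent to saying that every compact subset of $X$ has empty interior. Fix a fundamental sequence $(K_n)_{n\in\omega}$ for the hemicompact structure, which we may take to be increasing. Then $\mathrm{int}(K_n)=\varnothing$ for each $n$, so $A_n:=X\setminus K_n$ is dense in $X$, and the sequence $(A_n)_n$ is decreasing.

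Pick any $x\in X$ (the statement is vacuous if $X=\varnothing$). Since each $A_n$ is dense, $x\in\cl A_n$ for every $n$. Let $\H$ be the countably based filter generated by $\{A_n:n\in\omega\}$; then
\[
x\in\bigcap_{n\in\omega}\cl A_{n}=\adh\H,
\]
so $\adh\H\neq\varnothing$. By the Tanaka condition, $\adh_{\mathrm{Seq}}\H\neq\varnothing$, so there is a sequence $(x_k)_{k\in\omega}$ with $(x_k)_k\#\H$ converging to some point $y\in X$.

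Now I would derive the contradiction from hemicompactness. In a Hausdorff space the set $C:=\{x_k:k\in\omega\}\cup\{y\}$ is compact, hence $C\subseteq K_m$ for some $m$. On the other hand, the meshing condition $(x_k)_k\#\H$ applied to $A_m\in\H$ says that $A_m\cap\{x_k:k\geq j\}\neq\varnothing$ for every $j$, so infinitely many $x_k$ lie outside $K_m$, contradicting $\{x_k:k\in\omega\}\subseteq K_m$. Thus some point of $X$ must have a compact neighborhood. The only delicate step is the initial reformulation of the hypothesis in terms of nowhere density of the $K_n$; everything after that is forced by the definitions of hemicompactness and the Tanaka condition.
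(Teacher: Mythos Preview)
Your proof is correct and follows essentially the same approach as the paper's: both assume the $K_n$ have empty interior, form the decreasing family $A_n=X\setminus K_n$ with full adherence, and reach a contradiction by noting that any convergent sequence meshing with this family yields a compact set not swallowed by a single $K_m$. The only cosmetic difference is that the paper extracts a subsequence with $x_n\in X\setminus K_n$ to witness the contradiction, whereas you use the compactness of the full sequence together with its limit directly; both arguments are equivalent.
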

\begin{proof}
Let $(K_{n})_{n\in\omega}$ be an \emph{increasing }sequence of compact
subsets such that each compact subset of $X$ is contained in some
$K_{n}$. If there is $n$ with $\mathrm{int}K_{n}\neq\varnothing$,
we are done. Otherwise, each $K_{n}$ has empty interior, so that
its complement is dense. Thus $(X\setminus K_{n})_{n\in\omega}$ is
a decreasing sequence of sets with $\bigcap_{n\in\omega}\cl(X\setminus K_{n})=X$,
but there is no convergent sequence meshing with\emph{ }$(X\setminus K_{n})_{n\in\omega}$.
Indeed, if there was such a sequence, there would be a subsequence
$(x_{n})_{n\in\omega}$ with $x_{n}\in X\setminus K_{n}$ for each
$n$ that converges to a point $l$. But $K:=\{x_{n}:n\in\omega\}\cup\{l\}$
would then be a compact subset of $X$ that is not contained in any
of the $K_{n}$'s. Thus $X$ is not Tanaka.\end{proof}
\begin{cor}
\label{cor:homogTanka} A homogeneous Hausdorff hemicompact Tanaka
space is locally compact.
\end{cor}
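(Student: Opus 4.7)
The plan is to apply Theorem \ref{thm:hemiTanaka} to produce a single point admitting a compact neighborhood, and then spread this property over the whole space using homogeneity. Since a Hausdorff homogeneous space in which some point has a compact neighborhood must be locally compact at every point, the corollary should reduce to a one-line argument.

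More precisely, I would proceed as follows. First, I would invoke Theorem \ref{thm:hemiTanaka} applied to $X$ (which is Hausdorff, hemicompact and Tanaka by hypothesis) to obtain a point $x_{0}\in X$ and a compact set $K\subseteq X$ containing $x_{0}$ in its interior. Next, given an arbitrary $y\in X$, I would use the homogeneity of $X$ to select a homeomorphism $h\colon X\to X$ with $h(x_{0})=y$. Since homeomorphisms preserve both compactness and the operation of taking interiors, $h(K)$ is then a compact neighborhood of $y$. As $y$ was arbitrary, $X$ is locally compact.

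There is essentially no obstacle here: the whole content of the corollary lies in Theorem \ref{thm:hemiTanaka}, and the only remaining ingredient is the elementary observation that local compactness at a single point is propagated everywhere by a transitive action of self-homeomorphisms. The only care needed is to phrase ``has a point with a compact neighborhood'' so that it genuinely means a point with a compact set in its topological interior, rather than merely a point lying in some compact set, so that applying a homeomorphism transports a neighborhood base to a neighborhood base.
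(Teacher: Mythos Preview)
Your argument is correct and is exactly the intended one: the paper states the corollary without proof precisely because it follows immediately from Theorem~\ref{thm:hemiTanaka} together with the fact that homogeneity propagates a compact neighborhood at one point to all points. There is nothing to add.
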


\begin{cor}
\label{cor:hemicompactgroups}A Hausdorff hemicompact topological
group of countable tightness is locally compact and metrizable whenever
it is Tanaka.\end{cor}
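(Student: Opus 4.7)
The plan is to derive Corollary \ref{cor:hemicompactgroups} by combining Corollary \ref{cor:homogTanka} with the classical fact (cited in the introduction as \cite{AVAM}) that a locally compact topological group of countable tightness is metrizable.

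First I would observe that every topological group is a homogeneous topological space, since for any two points $g,h\in G$ the left translation $x\mapsto hg^{-1}x$ is a self-homeomorphism sending $g$ to $h$. Thus the hypotheses of Corollary \ref{cor:homogTanka} are met: $G$ is a Hausdorff hemicompact homogeneous Tanaka space, so it is locally compact. This is the step that does the real work, and it has already been carried out via Theorem \ref{thm:hemiTanaka}, whose proof produces an interior point of some $K_n$ and then uses homogeneity to translate a compact neighborhood to every point.

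Once local compactness is in hand, metrizability follows immediately from the Arhangel'ski\u\i--van Mill type result quoted in the introduction: a locally compact topological group of countable tightness is metrizable. Since countable tightness is assumed here, this concludes the argument.

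The whole proof is therefore a two-line assembly, and there is no real obstacle — the substance of the result is packaged into Theorem \ref{thm:hemiTanaka} and the cited metrization theorem for locally compact groups. The only point worth spelling out is the invocation of homogeneity of topological groups to pass from Theorem \ref{thm:hemiTanaka} (existence of one point with a compact neighborhood) to local compactness everywhere, which is exactly the content of Corollary \ref{cor:homogTanka}.
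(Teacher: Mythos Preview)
Your proposal is correct and matches the paper's proof essentially verbatim: invoke Corollary~\ref{cor:homogTanka} (using homogeneity of topological groups) to get local compactness, then apply the cited result that a locally compact topological group of countable tightness is metrizable. One minor slip: the metrization result you quote is due to Arhangel'ski\u\i{} and Malykhin \cite{AVAM}, not Arhangel'ski\u\i{} and van Mill.
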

\begin{proof}
This follows from Corollary \ref{cor:homogTanka} and the fact that
a locally compact topological group of countable tightness is metrizable
\cite{AVAM}.
\end{proof}

\section{Productively Fr\'echet groups}

\textit{Productively Fr\'echet} spaces are those whose product with
every strongly Fr\'echet space is (strongly) Fr\'echet \cite{JM.prodFrechet}.
To describe them, we first need to define strongly Fr\'echet filters.
A filter $\F$ on $X$ is \emph{strongly Fr\'echet }if 
\[
\H\in\mathbb{F}_{1},\H\#\F\then\exists\G\in\mathbb{F}_{1}:\G\geq\H\vee\F.
\]
Let $\mathbb{F}_{1}^{\triangle}$ denote the class of strongly Fr\'echet
filters. A topological space is \emph{productively Fr\'echet }if 
\[
\forall\H\in\filter_{1}^{\triangle},\,\adh\H\subseteq\adh_{\Seq}\H.
\]

\begin{thm}
\label{thm:prodF} \cite{JM.prodFrechet} The following are equivalent:
\begin{enumerate}
\item $X$ is productively Fr\'echet;
\item $X\times Y$ is strongly Fr\'echet for every strongly Fr\'echet space
$Y$;
\item $X\times Y$ is Fr\'echet for every strongly Fr\'echet space $Y$.
\end{enumerate}
\end{thm}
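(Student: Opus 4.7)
The plan is to establish the three implications via the cycle $(2)\then(3)\then(1)\then(2)$.

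The implication $(2)\then(3)$ is immediate since every strongly Fr\'echet space is Fr\'echet.

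For $(3)\then(1)$, my plan is to use a prime-topology construction. Given a strongly Fr\'echet filter $\H$ with $x\in\adh\H$, form the prime space $Z:=X\cup\{\infty\}$ in which every point of $X$ is isolated and a base of neighborhoods of $\infty$ is $\{H\cup\{\infty\}:H\in\H\}$. One checks that $Z$ is strongly Fr\'echet precisely because $\H$ is a strongly Fr\'echet filter: for a decreasing sequence $(A_n)$ in $X$ with $\infty\in\bigcap_n\cl A_n$, the family $(A_n)$ meshes with $\H$, so the definition of a strongly Fr\'echet filter supplies a countably based refinement $\G\geq\H\vee\{A_n:n\in\omega\}$ whose decreasing base $(G_n)$ may be taken inside $(A_n)$, and any choice $a_n\in G_n$ converges to $\infty$ in $Z$. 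Applying (3), the product $X\times Z$ is Fr\'echet; the point $(x,\infty)$ lies in the closure of the diagonal $\{(z,z):z\in X\}$, so a sequence in the diagonal converging to $(x,\infty)$ delivers the required sequence on $X$ that meshes with $\H$ and converges to $x$.

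For $(1)\then(2)$, which is the main work, assume $X$ is productively Fr\'echet and $Y$ is strongly Fr\'echet, and let $\F\in\filter_1$ on $X\times Y$ with $(x,y)\in\adh\F$. The strategy is to transfer the problem to $X$ via
\[
\H:=\pi_X\bigl(\F\vee(X\times\N_y)\bigr),
\]
a filter on $X$ with $x\in\adh\H$. The crucial technical step is to check that $\H$ is a strongly Fr\'echet filter: given $\H'\in\filter_1$ with $\H'\#\H$, the filter $\F\vee(\H'\times\N_y)$ on $X\times Y$ projects to $Y$ as the supremum of $\N_y$ with a countably based filter; strong Fr\'echet-ness of $Y$ supplies a sequence $(y_m')\to y$ meshing with this projection, and then $\F\vee(\H'\times(y_m')_m)$ is a countably based filter on $X\times Y$ whose $X$-projection is a countably based refinement of $\H\vee\H'$.

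Productively Fr\'echet then supplies a sequence $(x_n)\to x$ in $X$ with $(x_n)\#\H$. The filter $\F_1:=\F\vee((x_n)_n\times Y)$ is countably based with $(x,y)\in\adh\F_1$, so $\pi_Y\F_1\in\filter_1$ has $y\in\adh\pi_Y\F_1$. A second application of the strong Fr\'echet property of $Y$ yields a sequence $(y_m)\to y$ with $(y_m)\#\pi_Y\F_1$, and a diagonal selection on a decreasing countable base $(F_j)_{j\in\omega}$ of $\F$ produces strictly increasing indices $k_j,m_j$ with $(x_{k_j},y_{m_j})\in F_j$. The resulting sequence converges to $(x,y)$ and meshes with $\F$, so $X\times Y$ is strongly Fr\'echet.

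The main obstacle is the verification that $\H$ is a strongly Fr\'echet filter: the lift--project--refine--lift--project maneuver is delicate precisely because neither $Y$ nor $X\times Y$ need be first-countable, so one has no direct sequential access to $\N_y$ and must manufacture a countable cofinal structure by transferring information back and forth through the product. Once this is in place, the productively Fr\'echet hypothesis on $X$ together with a final diagonal extraction using $Y$'s strong Fr\'echet property routinely produces the desired sequence on $X\times Y$.
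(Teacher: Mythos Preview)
The paper does not prove Theorem~\ref{thm:prodF}; it merely quotes the result from \cite{JM.prodFrechet}. So there is no in-paper proof to compare against, and your write-up has to be judged on its own merits.

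Your argument is correct and is essentially the standard proof from \cite{JM.prodFrechet}. A few remarks:

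\begin{itemize}
\item In $(3)\then(1)$, the prime-space construction is exactly the right tool: the space $Z$ is strongly Fr\'echet precisely because $\H\in\mathbb{F}_1^\triangle$, and the diagonal trick recovers a sequence witnessing $x\in\adh_{\Seq}\H$. You might note explicitly that at isolated points the strongly Fr\'echet condition is vacuous, so only the check at $\infty$ matters.
\item In $(1)\then(2)$, the key identity you are implicitly using when you say the $Y$-projection ``is the supremum of $\N_y$ with a countably based filter'' is
\[
\pi_Y\bigl(\F\vee(\H'\times\N_y)\bigr)=\N_y\vee\pi_Y\bigl(\F\vee(\H'\times Y)\bigr),
\]
which holds because $\pi_Y\bigl(F\cap(H'\times V)\bigr)=V\cap\pi_Y\bigl(F\cap(H'\times Y)\bigr)$. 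This is worth stating, since it is what makes the right-hand factor countably based and hence accessible to the strongly Fr\'echet hypothesis on $Y$.
\item The assertion $(x,y)\in\adh\F_1$ is correct but uses more than $\adh(\F\vee\G)\subseteq\adh\F\cap\adh\G$; it follows because $(x_n)\#\H$ gives, for every $F\in\F$, $V\in\N_y$ and tail $T_k$ of $(x_n)$, a point of $F\cap(T_k\times V)$. One line making this explicit would tighten the exposition.
\item The final diagonal extraction is routine once you observe that $(y_m)\#\pi_Y\F_1$ yields, for each $j$ and each pair of thresholds, indices $n\geq k$ and $m\geq m_0$ with $(x_n,y_m)\in F_j$.
\end{itemize}

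In short: the proposal is sound, matches the approach of the cited source, and would serve as a self-contained proof where the paper gives none.
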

Thus it is clear that the proof of Theorem \ref{th:angelic} applies
with virtually no change to the effect that:
\begin{thm}
\label{th:myvariant2} 
\begin{enumerate}
\item If $G$ is a productively Fr\'echet topological group and $\hat{G}$
is Fr\'echet-Urysohn, then $\hat{G}$ is metrizable and locally compact.
\item If $G$ is a Fr\'echet topological group and $\hat{G}$ is productively
Fr\'echet, then $\hat{G}$ is metrizable and locally compact.
\end{enumerate}
\end{thm}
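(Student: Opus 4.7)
The plan is to mirror the proof of Theorem \ref{th:angelic} outlined in the introduction, with Theorem \ref{thm:prodF} playing the role of the product theorem (from \cite{michael}) used there. In both cases, the goal is to show that $G\times\hat{G}$ is Fr\'echet, hence sequential, hence a $k$-space, so that $\hat{G}$ is locally compact by \cite[Proposition 1.2]{dunford}, and then conclude metrizability via countable tightness plus the theorem of \cite{AVAM} on locally compact groups of countable tightness.

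For part (1), I would use that $\hat{G}$, being a Fr\'echet topological group, is strongly Fr\'echet by \cite{nyikos}. Since $G$ is productively Fr\'echet, the implication (1)$\then$(3) of Theorem \ref{thm:prodF} yields that $G\times\hat{G}$ is Fr\'echet, so sequential and hence a $k$-space. Proposition 1.2 of \cite{dunford} then gives local compactness of $\hat{G}$, and since $\hat{G}$ is Fr\'echet it has countable tightness, so \cite{AVAM} makes it metrizable.

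For part (2), the roles of $G$ and $\hat{G}$ are swapped: $G$ is Fr\'echet, hence strongly Fr\'echet by \cite{nyikos}, and $\hat{G}$ is productively Fr\'echet. Applying Theorem \ref{thm:prodF} to $X=\hat{G}$ and $Y=G$ again gives that $\hat{G}\times G$ is Fr\'echet, hence a $k$-space. Local compactness of $\hat{G}$ follows from \cite[Proposition 1.2]{dunford}. For the countable tightness step, one notes that a productively Fr\'echet space is in particular Fr\'echet (take the product with a singleton, which is trivially strongly Fr\'echet), so $\hat{G}$ has countable tightness, and \cite{AVAM} again yields metrizability.

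There is no real obstacle: the argument is structurally identical to that of Theorem \ref{th:angelic}, and the only point to verify is that Theorem \ref{thm:prodF} applies in each direction, which is immediate since topological groups that are Fr\'echet are automatically strongly Fr\'echet. The mildest subtlety is recording, for part (2), that productively Fr\'echet implies Fr\'echet, so that the countable tightness hypothesis required for the final application of \cite{AVAM} is available on $\hat{G}$.
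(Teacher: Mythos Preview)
Your proposal is correct and follows exactly the approach the paper intends: the paper simply remarks that ``the proof of Theorem \ref{th:angelic} applies with virtually no change,'' and you have faithfully spelled out those details, invoking \cite{nyikos} to upgrade Fr\'echet to strongly Fr\'echet on the appropriate factor, Theorem \ref{thm:prodF} for the product, \cite[Proposition 1.2]{dunford} for local compactness, and \cite{AVAM} for metrizability. Your observation that productively Fr\'echet implies Fr\'echet (needed for countable tightness in part (2)) is the only point not made explicit in the paper, and it is correct.
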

It is observed in \cite{JM.prodFrechet} that the $\Sigma$-product
of $\mathfrak{c}$ many copies of $\mathbb{R}$ is a non-metrizable
productively Fr\'echet topological group.

Note that since there are productively Fr\'echet group whose dual is
not hemicompact, Theorem \ref{th:myvariant2} no longer follows from
Theorem \ref{th:hemicompactgroups}. To see that such groups exist,
first note that the $\Sigma$-product of $\mathfrak{c}$ many copies
of $\mathbb{R}$ is a locally convex Fr\'echet-Urysohn topological vector
space, in particular a locally quasi-convex (by, e.g., \cite[Proposition 6.5]{aussmetr})
$k$-group. It is therefore \emph{subreflexive}, that is, 
\begin{eqnarray*}
i:G & \to & \hat{\hat{G}}\\
g & \mapsto & \left\langle g,\cdot\right\rangle 
\end{eqnarray*}
is an embedding.

Moreover, we have: 
\begin{prop}
\label{prop:subreflex} \cite[Proposition 5.10]{characterdual} A
subreflexive topological group $G$ is metrizable if and only if $\hat{G}$
is hemicompact. 
\end{prop}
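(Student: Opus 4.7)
The plan is to prove the two directions separately. The forward direction does not actually require subreflexivity, while the reverse direction is where subreflexivity plays the essential role.

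For the direction $(\Rightarrow)$, if $G$ is metrizable then $\hat{G}$ is a hemicompact $k$-space by the results of \cite{chascometr, aussmetr} already invoked in the paragraph preceding Proposition \ref{prop:variantsseq}; the conclusion is therefore immediate.

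For the direction $(\Leftarrow)$, assume $\hat{G}$ is hemicompact and fix an increasing fundamental sequence $(K_n)_{n\in\omega}$ of compact subsets of $\hat{G}$, together with a countable base $(V_k)_{k\in\omega}$ of neighborhoods of $0$ in $\mathbb{T}$. I would verify that the countable family
\[
W_{n,k}:=\{\chi\in\hat{\hat{G}}:\chi(K_n)\subseteq V_k\},\qquad n,k\in\omega,
\]
is a base of neighborhoods at the trivial character in the compact-open topology on $\hat{\hat{G}}$. Once this is established, $\hat{\hat{G}}$ is a Hausdorff topological group of countable character at the identity, hence metrizable by the Birkhoff--Kakutani theorem; since subreflexivity says that $i:G\to\hat{\hat{G}}$ is a topological embedding, $G$ inherits metrizability as a subspace.

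The only point that requires any real argument is that the sets $W_{n,k}$ form a neighborhood base at the identity, and this reduces to the following observation: given any compact $L\subseteq\hat{G}$ and any neighborhood $V$ of $0$ in $\mathbb{T}$, pick $n$ with $L\subseteq K_n$ and $k$ with $V_k\subseteq V$; then the basic compact-open neighborhood determined by $(L,V)$ contains $W_{n,k}$. Everything else is either a standard invocation of Birkhoff--Kakutani or the routine fact that metrizability is inherited by subspaces, which is why the result appears in the literature merely as the cited \cite[Proposition 5.10]{characterdual}.
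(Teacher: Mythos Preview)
The paper does not actually supply a proof of Proposition~\ref{prop:subreflex}: it is stated with a citation to \cite[Proposition 5.10]{characterdual} and then immediately applied. So there is nothing in the paper to compare your argument against line by line.

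That said, your proposal is correct and is essentially the standard argument one would expect behind the cited reference. The forward direction is exactly the fact from \cite{chascometr,aussmetr} that the paper already quotes. For the converse, your observation that a cofinal countable family of compact sets in $\hat{G}$, together with a countable base at $0$ in $\mathbb{T}$, yields a countable neighborhood base at the identity of $\hat{\hat{G}}$ in the compact-open topology is correct; the containment $W_{n,k}\subseteq\{\chi:\chi(L)\subseteq V\}$ whenever $L\subseteq K_n$ and $V_k\subseteq V$ is exactly what is needed, and finite intersections are handled since $\{\chi:\chi(L_1\cup L_2)\subseteq V_1\cap V_2\}$ refines the intersection of the two basic sets. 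Birkhoff--Kakutani and the subreflexivity embedding then finish the job. Your remark that subreflexivity is only needed for the reverse implication is also accurate.
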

As already noted, the $\Sigma$-product of $\mathfrak{c}$ many copies
of $\mathbb{R}$ is a productively Fr\'echet non-metrizable group. In
view of Proposition \ref{prop:subreflex}, its dual is not hemicompact. 

In conclusion, while Theorem \ref{th:angelic} and Proposition \ref{prop:variantsseq}
are not specific to dual groups (but are true for any hemicompact
group), the variants given in Theorem \ref{th:myvariant2} seem to
be specific to dual groups, even though the proof is essentially identical.

\address{F. Mynard, Mathematics, New Jersey City University, Kartnoutsos building
- Room 506 2039 John F. Kennedy Boulevard Jersey City, New Jersey
07305, USA}

\address{M. Tkachenko, Universidad Autnoma Metropolitana - Iztapalapa Av.
San Rafael Atlixco 186, Col. Vicentina C.P. 09340 Delegacin Izapalapa,
Mxico, D.F.}
\end{document}